\begin{document}

\begin{center}
{\LARGE\bf Convolution of Picard-Fuchs equations }
\\
\vspace{.25in} {\large {\sc Hossein Movasati}} \\
Instituto de Matem\'atica Pura e Aplicada, IMPA, \\
Estrada Dona Castorina, 110,\\
22460-320, Rio de Janeiro, RJ, Brazil, \\
E-mail:
{\tt hossein@impa.br} \\
 {\large {\sc Stefan Reiter}} \\
Mathematisches Institut, Universit\"at Bayreuth \\
95440 Bayreuth, Germany \\
E-mail: {\tt Stefan.Reiter@uni-bayreuth.de} \\
\end{center}

\newtheorem{theo}{Theorem}
\newtheorem{exam}{Example}
\newtheorem{coro}{Corollary}
\newtheorem{defi}{Definition}
\newtheorem{prob}{Problem}
\newtheorem{lemm}{Lemma}
\newtheorem{prop}{Proposition}
\newtheorem{rem}{Remark}
\newtheorem{conj}{Conjecture}
\newcommand\diff[1]{\frac{d #1}{dz}} 
\def\End{{\rm End}}              
\def\hol{{\rm Hol}}
\def\sing{{\rm Sing}}            
\def\spec{{\rm Spec}}            
\def\cha{{\rm char}}             
\def\Gal{{\rm Gal}}              
\def\jacob{{\rm jacob}}          
\newcommand\Pn[1]{\mathbb{P}^{#1}}   
\def\Z{\mathbb{Z}}                   
\def\OO{{\cal O}}                       

\def\Q{\mathbb{Q}}                   
\def\C{\mathbb{C}}                   
\def\as{\mathbb{U}}                  
\def\ring{{\sf R}}                             
\def\R{\mathbb{R}}                   
\def\N{\mathbb{N}}                   
\def\A{\mathbb{C}}                   
\def\P{\mathbb{P}}                   
\def\uhp{{\mathbb H}}                
\newcommand\ep[1]{e^{\frac{2\pi i}{#1}}}
\newcommand\HH[2]{H^{#2}(#1)}        
\def\Mat{{\rm Mat}}              
\newcommand{\mat}[4]{
     \begin{pmatrix}
            #1 & #2 \\
            #3 & #4
       \end{pmatrix}
    }                                
\newcommand{\matt}[2]{
     \begin{pmatrix}                 
            #1   \\
            #2
       \end{pmatrix}
    }
\def\ker{{\rm ker}}              
\def\cl{{\rm cl}}                
\def\dR{{\rm dR}}                

\def\hc{{\mathsf H}}                 
\def\Hb{{\cal H}}                    
\def\GL{{\rm GL}}                
\def\pedo{{\cal P}}                  
\def\PP{\tilde{\cal P}}              
\def\cm {{\cal C}}                   
\def\K{{\mathbb K}}                  
\def\k{{\mathsf k}}                  
\def\F{{\cal F}}                     
\def\M{{\cal M}}
\def\RR{{\cal R}}
\newcommand\Hi[1]{\mathbb{P}^{#1}_\infty}
\def\pt{\mathbb{C}[t]}               
\def\W{{\cal W}}                     
\def\Af{{\cal A}}                    
\def\gr{{\rm Gr}}                
\def\Im{{\rm Im}}                
\newcommand\SL[2]{{\rm SL}(#1, #2)}    
\newcommand\PSL[2]{{\rm PSL}(#1, #2)}  
\def\Res{{\rm Res}}              

\def\L{{\cal L}}                     
\def\Aut{{\rm Aut}}              
\def\any{R}                          
\newcommand\ovl[1]{\overline{#1}}    

\def\per{{\sf  pm}}  
\def\T{{\cal T }}                    
\def\tr{{\sf tr}}                 
\newcommand\mf[2]{{M}^{#1}_{#2}}     
\newcommand\bn[2]{\binom{#1}{#2}}    
\def\ja{{\rm j}}                 
\def\Sc{\mathsf{S}}                  
\newcommand\es[1]{g_{#1}}            
\newcommand\V{{\mathsf V}}           
\newcommand\Ss{{\cal O}}             
\def\rank{{\rm rank}}                
\def\diag{{\rm diag}}
\def\BM{{\sf H}}
\def\Fi{{S}}
\def\Re{{\rm Re}} 
\def\id{{\rm id}}
\def\a{{\bf a}}
\def\b{{\bf b}}
\def\c{{\bf c}}
\def\U{{\bf U}}
\def\ord{{\rm ord}}
\def\QQ{{\mathbb Q}}
\def\CC{{\mathbb C}}
\def\NN{{\mathbb N}}
\def\del{{\partial}}
\newcommand{\ten}{ {\otimes} }
\newcommand{\To}{ \longrightarrow }

\def\Ra{\mathrm{Ra}}
\def\nf{g_2}                         

\def\sol{{I}}

\begin{abstract}
We determine explicit generators for a cohomology group constructed from a solution of a Fuchsian linear differential equation and
describe its relation with cohomology groups  with  coefficients in a local system. 
In the parametrized case, this yields into an algorithm which computes new Fuchsian differential  equations from those depending on multi-parameters.
This generalizes the classical convolution of solutions of Fuchsian differential equations. 
\end{abstract}
\section{Introduction}
Explicit expressions for Picard-Fuchs equations (or Gauss-Manin connections in a general context)  attached to families of 
algebraic varieties are usually huge even if the corresponding family is simple, for examples see the first author's book \cite{ho13}. However, there are some families of algebraic varieties for which
such expressions are small enough to fit into a mathematical paper, but one is not able to
calculate them through the Dwork-Griffiths method (see for instance \cite{gr69}) or its modification in the context of Brieskorn modules, see \cite{ho13}, see also \cite{Lairez2016} for another variant of this,  
(we call this algebraic method). 
For such families, we first compute a period and then the corresponding Picard-Fuchs equation, see for instance   \cite{alenstzu} 
(we call this transcendental method). 
The main reason why computing Picard-Fuchs equations fails through the algebraic method is that in this way we produce huge polynomials and the  Groebner basis algorithm fails to work. The transcendental method is restricted to a very particular families of algebraic varieties. 

In this article, we propose a new method which uses 
the internal fibration structure of algebraic varieties in order to perform Picard-Fuchs equation computations. It involves only
solving linear equations and it is  a generalization  of  the classical convolution of solutions of Fuchsian differential
equations and Deligne's work on the cohomology with coefficients in a local system, see \cite{de70/1}. One of our main motivations
for the present work is the increasing need for explicit expressions of Picard-Fuchs equations  in 
Topological String Theory and in particular in the B-model of mirror symmetry, see for instance \cite{can91}. We are also inspired by a personal communication of the first author with Ch. Doran few years ago, in which he  expressed the importance of iterative construction of Picard-Fuchs equations in the case of Calabi-Yau manifolds. Meantime in the paper \cite{DoranMalmendier} he and Malmendier have realized this in the case of $14$ families of Calabi-Yau threefolds classified in \cite{dormor}.

Let us be given:
$$
\begin{array}{lllll}
× & × & X_1 & × & ×\\
× &  \stackrel{}{\swarrow} & × & \stackrel{}{\searrow} & ×\\
\P^1_y  & × & × & × & \P^1_x \\
× & \stackrel{}{\nwarrow}& × & \stackrel{}{\nearrow}  & ×\\
× & × & X_2 & × & ×
\end{array}\  \ \     
$$
and a global meromorphic section $\omega_i,\ i=1,2,$ of the $n_i$-th cohomology bundle
of $X_i\to \P^1_x\times \P^1_y$. Here,  $X_i,\ i=1,2,$ are two algebraic varieties over $\C$, $\P_*^1,\ *=x,y,$ is the projective line with the coordinate system $*$ and all
the arrows are morphisms of algebraic varieties. The convolution of the above data 
in the framework of Algebraic Geometry is simply the fiber product
$$
X\rightarrow \P^1_y,\ \  \
X:=\cup_{y\in \P ^1_y} X_{1,y} \times_{\P ^1_x}X_{2,y},\ \ \ 
\omega:=dx\wedge \omega_1\wedge \omega_2. 
$$
Here,  $X_{i,y},\ i=1,2,$ is the fiber of $X_i\to \P^1_y$ 
over the point $y\in\P^1_y$ and $\omega$ gives us a global meromorphic section of the $(n_1+n_2+1)$-the cohomology bundle of $X\to \P^1_y$.  
Let   $\delta_{i,x,y},\ i=1,2$ be a continuous family of cycles in the fibers of $X_i\to \P^1_x\times \P^1_y$. 
Knowing the linear differential system (Gauss-Manin connection) of $X_i\to \P^1_x\times \P^1_y$, and in particular, the Picard-Fuchs equation 
\begin{equation}\label{31july2019-1}
L_i:=p_{0,i} \del^{n_i}_x+\ldots +p_{n_i-1,i}\del_x+p_{n_i,i},\ \ p_{j,i}\in \k[x,y],\ \ i=1,2,
\end{equation}
of  the periods $\sol_i(x,y):=\int_{\delta_{i,x,y}}\omega_i$, and under certain irreducibility condition (see \ref{15july2013}) we give an algorithm for computing the Picard-Fuchs equation 
\begin{equation}\label{31july2019-2}
L:=q_{0} \del^n_y+\ldots +q_{n-1}\del_y+q_{n},\ \ q_{j}\in \k[y], 
\end{equation}
of  
\begin{equation}
\label{16july2013}
\sol(y):=\int_{\delta}\sol_1(x,y)\sol_2(x,y)dx,
\end{equation}
where $\delta$ is any closed path in the $x$-plane such that $\sol_1$ and $\sol_2$ along $\delta$ are one valued. This integral can be written as 
the integration of $\omega$ over a cycle $\tilde \delta_y \in H_{n_1+n_2+1}(X_y,\Z)$, where $X_y$ is the fiber of $X\to \P^1_y$ over $y$.

\section{Cohomology with coefficients in a local system}
\label{section1}
\def\O{{\mathcal O}}
In this section we remind some basic facts on local systems and connections with regular singularities. 
For further details, the reader is referred to  \cite{de70/1}. We fix a field $\k$ of characteristic zero and 
not necessarily algebraically closed and work over the category of algebraic varieties over $\k$. If $\k$ is a subfield of $\C$ or $\C(t)$, 
where $t$ is a multi-parameter, then for  an algebraic variety $M$ over $\k$ we use the same letter $M$ to denote 
the underlying complex variety or family of varieties; being clear in the text which we mean.

\subsection{Flat connections}
Let $M$ be a smooth variety, $E$ be a vector bundle over $M$.
 We consider a flat regular connection 
$$
\nabla: E\to \Omega^1(E).
$$
We use the same notation $E$ for both the vector bundle and the sheaf of its sections.
We have the induced maps 
$$
\nabla_i: \Omega^{i}(E)\to \Omega^{i+1}(E),\ \ \nabla_i(\omega\otimes e)=d\omega\otimes e+(-1)^i\omega\wedge\nabla(e) 
$$ 
and the integrability is by definition $\nabla_1\circ \nabla_0=0$. It implies that $\nabla_{i+1}\circ\nabla_i=0$ and so 
we have  the complex $( \Omega^{i}(E),\nabla_i)$. According to the comparison  theorem of Grothendieck, see for instance Deligne's notes \cite{de70/1}
Theorem 6.2
we have canonical isomorphisms
\begin{equation}
 \label{22A2013}
H^*(M, \O(E))\to \uhp^*( M^{an}, \Omega^{*}(E))\leftarrow \uhp^*( M, \Omega^{*}(E)) 
\end{equation}
of $\C$-vector spaces.
Here,  $M^{an}$ is the underlying complex variety of $M$, $\O(E)$ is the sheaf of constant sections of $E$ and $H^*(M, \O(E))$ is 
the Cech cohomology with coefficients in $\O(E)$. The first $\uhp$ is the hypercohomology in the complex context and 
the second one is the algebraic hypercohomology. Note that, $\Omega^{*}(E)$ is an algebraic sheaf and 
so its sections have poles of finite order along a compactification  of $M$. 
If $M$ is an affine variety then  $H^i(M, \Omega^{*}(E))=0$ for $i>0$ and so
\begin{equation}
\label{22aug2013}
\uhp^i( M, \Omega^{*}(E))\cong \frac{\ker \left(H^0( M, \Omega^{i}(E)) \to H^0( M, \Omega^{i+1}(E))\right)}{\Im \left(H^0( M, \Omega^{i-1}(E)) \to H^0( M, \Omega^{i}(E))\right)}
\end{equation}
see \cite{de70/1} Corollary 6.3.

\subsection{Logarithmic differential forms}
Let us now consider a meromorphic connection $\nabla$ on 
$X$ with poles along a normal crossing divisor $S\subset X$, and hence, it
induces a holomorphic connection on $M:= X\backslash S$. We denote by $\Omega^1_X\langle S\rangle $ the sheaf of meromorphic differential forms in $X$ 
with only logarithmic poles along $S$. The sheaf $\Omega^p_X\langle S\rangle $ is $p$-times wedge product of $\Omega^1_X\langle S\rangle$. If $\nabla$ has only logarithmic poles along $S$, 
see \cite{de70/1} page 78, then $\nabla$ induces $\Omega^p_X\langle S\rangle (E)\to \Omega^{p+1}_X\langle S\rangle (E) $  and 
we have an isomorphism 
\begin{equation}
\label{22a2013}
\uhp^*(X, \Omega^*_X\langle S\rangle (E))\cong \uhp^*(X, \Omega^*_X(E))
\end{equation}
induced by inclusion and then restriction to $M$ provided that the residue matrix of $\nabla$ 
along the irreducible components of $S$ 
does not have eigenvalues in $\N$, see \cite{de70/1} Corollary 3.15. These conditions will appear later 
in   Theorem  \ref{11Jul2013} and Theorem \ref{11july2013}.  
 If $X$ is an affine variety then we conclude that 
in  (\ref{22aug2013}) every element is represented by a logarithmic differential, see also \cite{de70/1} Corollary 6.10. 

The hypercohomology groups (\ref{22A2013}) and (\ref{22a2013}) are finite dimensional $\k$-vector space, see \cite{de70/1} Proposition 6.10  and \cite{dim04}
Proposition 2.5.4. However, explicit bases for these cohomology groups and algorithms which compute
an element as a linear combination of  the basis, are
not the main focus of \cite{de70/1, dim04}. A regular connection may not have logarithmic poles along $S$ and one has to 
modify it in order 
to get such a property, see Manin's result in \cite{de70/1} Proposition 5.4. 
In our terminology this is the same as to write any regular
differential equation in the Okubo format, see \S\ref{okubosection}. 
All these together, leads us to that fact that the theoretical approach in \cite{de70/1} is not applicable to our main 
problem posed the Introduction.

\subsection{Relation with integrals}
Let us now consider meromorphic global sections $e_1,e_2,\ldots,e_n$ of $E$ such that
for points  $x$ in some open Zariski subset of $M$, $e_{i}(x),\ \ i=1,2,\ldots,n,$ form a basis
of $E_x$. Replacing $M$ with this Zariski subset, we can assume that this property is valid
for all $x\in M$. In this way $E$ becomes a trivial bundle. Let $e=[e_1,e_2,\ldots,e_n]$ and
$$
\nabla(e^\tr)=A\cdot e^\tr
$$
where $A$ is a $n\times n$ matrix whose entries are regular differential forms in $M$ (with poles along
the complement of $M$ in its compactification). We identify $(\Omega^i_M)^n$ with  $\Omega^{i}(E)$ through the map $\omega\mapsto \omega\cdot e^{\tr}$ and we get:
$$
\nabla_i : (\Omega^i_M)^n\to (\Omega^{i+1}_M)^n,\ \ \nabla_i \omega=d\omega+(-1)^ i\omega A 
$$
and so
$$
\uhp^i( M, \Omega^{*}(E))\cong \frac{\ker \left( H^0( M, \Omega^{i}_M)^n \to H^0( M, \Omega^{i+1}_M)^n\right )}{\Im \left
(H^0( M, \Omega^{i-1}_M)^n \to H^0( M, \Omega^{i}_M)^n\right)}
$$
Let $\check E$, $\check\nabla: \check E\to \Omega^1(\check E)$ and $\check e_i$  be the dual bundle to $E$, 
the dual connection and the dual basis, respectively. We have $\nabla \check e^\tr=-A^\tr \check e^\tr$.  
For a flat section $I$ of $\check E$ we write $I=\check e\cdot f$, where 
$f=[f_1,f_2,\ldots, f_n]^{\tr}$ and $f_i$'s are holomorphic functions in  a small open set $U$ in $M$. 
We have $0=\nabla I=(\nabla \check e) f+\check e df=\check e(df-Af)$, and so,   we get a system
$$
L:\ \ \ dY=AY
$$
with the solution $f$. 
Let us define  
$$
 H^0( M, \Omega^{i}_M)^n_f:=\{\omega\in  H^0( M, \Omega^{i}_M)^n  \mid \omega\cdot f=0\}
$$
and let $H^i_f$ be the $i$-th cohomology group of the complex $\left( H^0( M, \Omega^{i}_M)^n_f,\nabla_i\right)$. 
We also define
\begin{equation}
\label{19mar2016}
H^i_\dR(M,L):=\frac{\{ \omega=\sum_{k=1}^n f_k\omega_k\mid \omega_k\in   H^0(M, \Omega^{i}_M), \ \ d\omega=0\} } 
{\{ d(\sum_{k=1}^n f_k\omega_k)\mid \omega_k\in   H^0(M, \Omega^{i-1}_M)\} }.\ 
\end{equation}  
This depends on $f$, however, for simplicity we have not used $f$ in its notation. 
We have the exact sequence 
\begin{equation}
\label{18nov2012}
H^i_f \to \uhp^i( M, \Omega^{*}(E))\to H^i_\dR(M,L)\to H^{i+1}_f
\end{equation}
which is the part of the long exact sequence of the short
exact sequence  $0\to  H^0( M, \Omega^{i}_M)^n_f\stackrel{j}{\to}  H^0( M, \Omega^{i}_M)^n \to {\rm cokernel}(j) \to 0$.  

From now on we use the cohomology group  $H^i_\dR(M,L)$.  The advantage of this is that  we can integrate its elements.
Let $\delta$ be a topological $i$-cycle in $M$
such that the restriction of the analytic continuations of $f_k$'s to $\delta$ is one valued. 
For $\omega$ in the right hand side (\ref{19mar2016})
the integration $\int_{\delta}\omega$ is well-defined. One of our motivations in the present text is  to study this integral.
Later we will see that for $M$ the punctured line  $H^1_\dR(M,L)$ is finite dimensional $\C$-vector space and so the $\C$ vector space generated by
 $$
\int_{\delta} f_{i}\omega, \ \ i=1,2,\ldots,n,\ \ \omega\in H^0(M, \Omega^{i}_M)
$$
for a fixed $\delta$ is of finite dimension.

\section{Cohomology of linear differential equations}
\def\dR{{\rm dR}}
In this section we translate the machinery introduced in the previous section for the case of the punctured line, that is, $M$ is $\P^1$ minus a finite number of points.
We consider local systems given by Fuchsian differential equations and  we give an
explicit set of generators for the corresponding cohomology groups. For simplicity, we work with $\k\subset \C$. The case $\k\subset \C(t)$ is reduced to the previous one
by  taking $t$ as a collection of algebraically independent transcendental numbers in $\C$. 
\subsection{The case of the punctured projective line}
Let 
\begin{equation}
\label{8july}
L: Y'=AY,\ \ '=\del _x
\end{equation}
 be a Fuchsian differential system of dimension $n$ with a solution  $f=[f_1,f_2,\cdots,f_n]^\tr$. We assume that the entries of $A$ are in $\k(x)$.
 Let also $S\subset \P ^1$ be the set of singularities of  $L$. We define 
\begin{equation}
\label{12.09.2019}
H^1_\dR(\P ^1-S, L):=\frac{ \left\langle pf_idx \mid p\in \k(x),\ \ {\rm pol}(p)\subset S,\ \ i=1,2,\ldots, n\right \rangle_\k }{ \langle d(pf_i)
\mid p\in \k(x),\ \ {\rm pol}(p)\subset S,\ \ i=1,2,\ldots,n
\rangle_\k    }
\end{equation}
which is the same as in (\ref{19mar2016}) for $\k=\C$. 
For a Fuchsian differential operator  
\begin{equation}\label{L}
L:=p_0 \del^n_x+\ldots +p_{n-1}\del_x+p_n,\ \ p_i\in \k[x] 
\end{equation}
with a solution $f$, that is $Lf=0$, 
we can attach the linear differential system (\ref{8july})
with
\begin{equation}
\label{31july2019emre}
A=\begin{pmatrix}
   0&1&0&\cdots &0\\
   0&0&1&\cdots &0\\
   \vdots&\vdots &\vdots&\cdots &\vdots\\
    0&0&0&\cdots &1\\
    -\frac{p_n}{p_0}&-\frac{p_{n-1}}{p_0}&-\frac{p_{n-2}}{p_0} &\cdots & -\frac{p_1}{p_0}
  \end{pmatrix}
\end{equation}
and so
\begin{equation}
\label{12sept2019}
H^1_\dR(\P ^1-S, L):=\frac{ \langle pf^{(i)}dx \mid p\in \k(x),\ \ 
{\rm pol}(p)\subset S,\ \ i=0,1,\ldots, n-1\rangle_\k }{ \langle d(pf^{(i)})
\mid p\in \k(x),\ \ {\rm pol}(p)\subset S,\ \ i=0,1,\ldots,n-1
\rangle_\k    }.
\end{equation}
\subsection{Indicial equation}
In order to study the Fuchsian differential equation \eqref{L}, $Ly=0$, near a point $t \in \P^1$ it is useful to compute its Riemann-scheme  
\cite[Section~3]{2007Beukers}. 
Let 
$$
 a_{i,t}:=\lim_{x\to t} (x-t)^i \frac{p_i}{p_0}\in \k \ \ \ \ \ t\in\k, 
$$
$$
 a_{i,\infty}:=\lim_{x\to \infty} (-x)^i \frac{p_i}{p_0} \in \k. \ \ \ \ 
$$
Since $L$ is Fuchsian we have
$$
\frac{p_i}{p_0}= \frac{a_{i,t}}{(x-t)^i} + R_{i,t}, \ \ \ \ord_{x=t} R_{i,t} \geq -i+1,
$$
for any finite $t\in\k$, and for any $l\geq n$ 
$$
x^l\frac{p_i}{p_0}=(-1)^i a_{i,\infty}x^{l-i}+P_{i,l,\infty}(x) + R_{i,l,\infty}(x).
$$
where $P_{i,l,\infty}(x)$ is a polynomial in $x$ of degree $<l-i$ and $R_{i,l,\infty}$ is
a sum over all finite singularities $t_j\in\bar\k $ of $L$, of polynomials in $\frac{1}{x-{t_j}}$ of degree $\leq i$.
Note that both $R_{i,l,\infty}$ and $P_{i,l,\infty}$ are defined over $\k$.  We conclude that the 
differential operator $L$ can be also written in the format
\begin{eqnarray}
\label{format}
 & &\partial_x^{(n)}+\sum_{i=1}^{n}\frac{a_{i,t}}{(x-t)^{i}}\partial_x^{(n-i)}+\sum_{i=1}^{n}R_{i,t} \partial_x^{(n-i)},\\
 \label{formatb} & &x^l\partial_x^{(n)}+\sum_{i=1}^{n} a_{i,\infty} x^{l-i}  \partial_x^{(n-i)}+\sum_{i=1}^{n}(P_{i,l,\infty}+R_{i,l,\infty }) \partial_x^{(n-i)}.
\end{eqnarray}
Furthermore the indicial equation $I_{t}$ at 
$t$ is given by
\begin{eqnarray}
 I_{t}&=& X (X-1) \cdots (X-n+1)+a_{1,t} X (X-1) \cdots (X-n+2)+\cdots+ a_{n,t}  \label{Indicial} \\ 
 I_{\infty} &=&X (X+1) \cdots (X+n-1)+a_{1,\infty} X (X+1) \cdots (X+n-2)+\cdots+a_{n,\infty}\label{Indicialb}
\end{eqnarray}
The Riemann scheme of $L$ at a point $t\in\k\cup\{\infty\}$ is the set of the roots of $I_t$.

\subsection{Explicit set of generators, $\k=\bar \k$}
We are now in a position to describe an explicit set of generators  for the cohomology group 
$H^1(\P ^1-S, L)$, where $S=\{t_1,t_2,\ldots,t_t,\infty\}$ is the set of singularities of $L$. 
\begin{theo}
\label{11Jul2013}
Let $\k$ be an algebraically closed subfield of $\C$. 
If the Fuchsian differential operator $L$ 
has no integer exponent $\geq n$ in the Riemann-scheme at a finite point and no positive integer exponent at $\infty$
then
$H^1_\dR(\P ^1-S, L)$ is generated by
\begin{equation}
\label{15aug2012}
(x-t_j)^{-1} f^{(i)} dx,\ \ j=1,2,\ldots,r, \ \ i=0,1,\ldots,n-1
\end{equation}
and so it is of dimension at most $n\cdot r$. 
\end{theo}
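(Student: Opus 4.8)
\emph{Proof strategy.}
The plan is to reduce an arbitrary class in $H^1_\dR(\P^1-S,L)$, step by step, to a $\k$--linear combination of the forms $(x-t_j)^{-1}f^{(i)}dx$, the only non‑formal input being the precise shape of the obstruction one meets when lowering the order of a pole at a singular point, which turns out to be governed exactly by the indicial polynomial.

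First I would identify the numerator. Since $L$ is fuchsian, the roots of $p_0$ lie in $S$ and each $\frac{p_l}{p_0}$ is regular at $\infty$, so the relation $f^{(n)}=-\sum_{l=1}^{n}\frac{p_l}{p_0}f^{(n-l)}$ shows that $q\,f^{(k)}dx$ lies in the numerator for every $k\ge 0$ and every $q\in\k(x)$ with ${\rm pol}(q)\subset S$, and that the exact forms are the $d\!\left(\sum_{i=0}^{n-1}g_i f^{(i)}\right)$ with ${\rm pol}(g_i)\subset S$. As $\k=\bar\k$, all singular points are $\k$--rational, so by partial fractions the numerator is spanned over $\k$ by the forms $\frac{f^{(i)}}{(x-t_j)^{k}}dx$ $(1\le j\le r,\ k\ge 1,\ 0\le i\le n-1)$ together with the polynomial forms $x^m f^{(i)}dx$ $(m\ge 0)$. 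It therefore suffices to express each such form with $k\ge 2$, and each polynomial form, as a $\k$--combination of the generators modulo exact forms.

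The heart of the matter is the reduction at a finite singular point $t=t_j$. From $d\bigl((x-t)^{1-k}f^{(i)}\bigr)$ one gets, for $0\le i\le n-2$ and $k\ge 2$, the \emph{elementary congruence} $\frac{f^{(i)}}{(x-t)^{k}}dx\equiv\frac{1}{k-1}\,\frac{f^{(i+1)}}{(x-t)^{k-1}}dx$, which lowers the pole order at the cost of raising the derivative and touches no other point. Iterating it, every $\frac{f^{(i)}}{(x-t)^{k}}dx$ with $k\ge 2$ is congruent either to a generator $(x-t)^{-1}f^{(i')}dx$ or to some $\frac{f^{(n-1)}}{(x-t)^{k'}}dx$ with $k'\ge 2$. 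For the latter I would use $f^{(n)}dx=d(f^{(n-1)})$ and the expansion $\frac{p_l}{p_0}=\frac{a_{l,t}}{(x-t)^{l}}+R_{l,t}$ with $\ord_{x=t}R_{l,t}\ge 1-l$: integrating $(x-t)^{-m}d(f^{(n-1)})$ by parts and then substituting the differential equation yields
\[
m\,\frac{f^{(n-1)}}{(x-t)^{m+1}}dx\equiv-\sum_{l=1}^{n}a_{l,t}\,\frac{f^{(n-l)}}{(x-t)^{m+l}}dx-\sum_{l=1}^{n}R_{l,t}\,\frac{f^{(n-l)}}{(x-t)^{m}}dx .
\]
Collapsing each term of the first sum back to $\frac{f^{(n-1)}}{(x-t)^{m+1}}dx$ by the elementary congruence and using $I_t(X)=\prod_{s=0}^{n-1}(X-s)+\sum_{l=1}^{n}a_{l,t}\prod_{s=0}^{n-l-1}(X-s)$, the left-hand side acquires the coefficient $\dfrac{I_t(m+n-1)}{(m+1)(m+2)\cdots(m+n-1)}$. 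For $m\ge 1$ one has $m+n-1\ge n$, so by hypothesis $m+n-1$ is not an exponent at $t$ and this coefficient is nonzero; hence $\frac{f^{(n-1)}}{(x-t)^{m+1}}dx$ is a multiple of $\sum_{l}R_{l,t}\frac{f^{(n-l)}}{(x-t)^{m}}dx$ modulo exact forms. Partial-fractioning the latter, the key observations are: it has no polynomial part, because $L$ fuchsian forces $\frac{p_l}{p_0}$ to be proper; its part supported at $t$ has pole order $<m+1$, so one recurses on $k$ and the recursion terminates; and its poles away from $t$ occur only in the shape ``derivative index $n-l$, pole order $\le l$'', which the elementary congruence converts into genuine \emph{simple} poles at the $t_{j'}$ without ever reaching $f^{(n)}$ and hence without creating a single new pole of order $\ge 2$ anywhere. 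Thus each $\frac{f^{(i)}}{(x-t_j)^{k}}dx$ with $k\ge 2$ is, modulo exact forms, a $\k$--combination of the generators, and no reduction performed at $t_j$ disturbs the pole orders already achieved at the other points.

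There remain the polynomial forms. Here $f^{(i)}dx=d(f^{(i-1)})$ is exact for $i\ge 1$, so $x^m f^{(i)}dx$ reduces to a $\k$--multiple of $x^p f\,dx$ with $p\ge 0$; passing to the chart $w=1/x$ at $\infty$ one repeats the above reduction almost verbatim (now with $f$ satisfying the order $n$ fuchsian equation attached to $\infty$), the obstruction to lowering the order of the pole at $w=0$ being a value $I_\infty(\cdot)$ at a positive integer, which is nonzero exactly because $L$ has no positive integer exponent at $\infty$; the shift between the two numerical hypotheses simply reflects $\ord_\infty(dx)=-2$, which forces this reduction to descend one step further than in the finite case. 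Assembling the finitely many reductions, an arbitrary class is exhibited as a $\k$--combination of the $n\cdot r$ forms $(x-t_j)^{-1}f^{(i)}dx$, which proves generation and the bound $\dim H^1_\dR(\P^1-S,L)\le n\cdot r$. The steps I expect to be delicate are two: obtaining the obstruction coefficient $I_t(m+n-1)$ in closed form and checking that the two hypotheses are precisely what make all these coefficients invertible; and the global bookkeeping behind the ``no interference'' claim — that the auxiliary poles created when reducing at $t_j$ are genuinely simple at the other singular points — since without it a single pass over $t_1,\dots,t_r,\infty$ need not terminate.
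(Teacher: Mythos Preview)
Your argument is correct and matches the paper's proof almost step for step: the elementary congruence $d\bigl((x-t)^{1-k}f^{(i)}\bigr)=0$, the reduction to the top derivative, the identification of the obstruction coefficient with $I_{t}(m+n-1)$ divided by a rising factorial, and the check that the residual $R_{l,t}$--terms produce at the other $t_{j'}$ only contributions of the shape ``derivative index $n-l$, pole order $\le l$'' (hence collapsible to simple poles without ever hitting $f^{(n)}$) are exactly what the paper does. The paper packages the descent as an induction on $i+l$ rather than on the pole order of the $f^{(n-1)}$--term, but that is the same well--ordering.

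The one place where the paper proceeds differently is at $\infty$: instead of changing to $w=1/x$, it stays in the $x$--coordinate, raises to $x^{l}f^{(n)}dx$ with $l\ge n$ via the same elementary congruence, and then uses the expansion $x^{l}\frac{p_i}{p_0}=(-1)^{i}a_{i,\infty}x^{l-i}+P_{i,l,\infty}(x)+R_{i,l,\infty}(x)$ directly; the obstruction comes out as $I_\infty(l-n+1)$ with $l-n+1\ge 1$, which is precisely your ``positive integer'' condition. This avoids the bookkeeping your chart change would still require---expressing the $\partial_w^{\,i}f$ back as $\k[x]$--combinations of the $f^{(i)}$ and checking that the resulting polynomial pieces do not reintroduce work---so your ``almost verbatim'' understates what is left to do, though the idea is sound.
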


\begin{proof}
All the qualities below are in the cohomology group $H^1_\dR(\P ^1-S, L)$ 
 that is obviously generated by
$$
f^{(i)} x^l dx,  \frac{f^{(i)}}{(x-t_j)^l} dx,\ \ \ l \in \NN,\ \ i=0,1,\ldots, n-1,\ \ j=1,2,\ldots,r. 
$$
At first we show that 
$\frac{f^{(i)}}{(x-t_j)^l} dx,\ \ l \in \NN,\ \ i=0,1,\ldots, n-1,$ 
is in the $\k$-vector space $V$‌ generated by (\ref{15aug2012}). Since
$$
0=d(\frac{f^{(i)}}{(x-t_j)^l})=\frac{f^{(i+1)}}{(x-t_j)^l} dx+(-l) \frac{f^{(i)}}{(x-t_j)^{l+1}}dx 
$$
we get 
\begin{equation}
\label{9july2013}
\frac{f^{(i)}}{(x-t_j)^l}dx=\frac{1}{l-1}\frac{f^{(i+1)}}{(x-t_j)^{l-1}}dx=\cdots= 
\left\{ \begin{array}{cc}
   \frac{1}{(l-1)\cdots (l-n+i)}\frac{f^{(n)}}{(x-t_j)^{l-n+i}}dx & i+l \geq n+1 \\
    \frac{1}{(l-1)!}\frac{f^{(i+l-1)}}{x-t_j}dx & i+l < n+1 \\                                                                                                                                                                                         
\end{array}\right.
\end{equation}
Now, we use induction on $i+l$. For $i+l<n+1$, the claim follows from the second case in (\ref{9july2013}).
Thus by the first case in (\ref{9july2013}) we can assume $i=n$. 
We have
\begin{eqnarray*}
\frac{f^{(n)}}{(x-t_j)^l} dx &\stackrel{\eqref{format}}{=}&
-(\sum_{i=1}^n  a_{i,t_j} \frac{f^{(n-i)}}{(x-t_j)^{i+l}})dx-(\sum_{i=1}^{n }\frac{R_{i,t_j}}{(x-t_j)^l} f^{(n-i)})dx
\\
&\stackrel{\eqref{9july2013}}{=}& 
-(\sum_{i=1}^n \frac{a_{i,t_j}}{l(l+1)\cdots (l+i-1)})\frac{f^{(n)}dx}{(x-t_j)^l} -(\sum_{i=1}^{n }\frac{R_{i,t_j}}{(x-t_j)^l} f^{(n-i)})dx.
\end{eqnarray*}
In 
$
\frac{R_{i,t_j}}{(x-t_j)^l} f^{(n-i)}dx
$
there appear only terms  $\frac{f^{(n-i)}}{(x-t_j)^{l+k}} dx$ with $k\leq i-1$
and terms $\frac{f^{(n-i)}}{(x-t_{j'})^i}dx$ for $j'\neq j$.
Using the first case in (\ref{9july2013}) the former terms are by induction in $V$ and 
the latter terms by the second case in (\ref{9july2013}).
By assumption  we  get
\begin{equation*}
 1+(\sum_{i=1}^n \frac{a_{i,t_j}}{l(l+1)\cdots (l+i-1)})\stackrel{\eqref{Indicial}}{=}
    \frac{I_{t_j}(l+n-1)}{l\cdots (l+n-1)} \neq 0,\ \forall l\in\N.
\end{equation*}
Hence
$\frac{f^{(n)}}{(x-t_j)^l} dx \in V$.

Similarly we prove that 
$x^l f^{(i)} dx$ is in the vector space $V$.  If $l-i<0$ then we 
have 
\begin{equation*}
x^lf^{(i)}dx=-(l-1)x^{l-1}f^{(i-1)}dx=\cdots=0.
\end{equation*}
For $l-i\geq 0$  
we use induction on $l-i$ and we have
\begin{equation}\label{xfn}
x^lf^{(i)}dx=\frac{1}{-(l+1)}x^{l+1}f^{(i+1)}dx=\cdots=\frac{(-1)^{n-i}}{(l+1)\cdots (l+n-i)}x^{l+n-i}f^{(n)}dx
\end{equation}
and so we can assume that $i=n$. Now,  for $l\geq n$ we have by (\ref{xfn}) and (\ref{formatb})
\begin{eqnarray*}
x^l f^{(n)} dx &=& -\sum_{i=1}^n a_{i,\infty} x^{l-i} f^{(n-i)}dx- \sum_{i=1}^{n }P_{i,l, \infty} f^{(n-i)}dx- 
\sum_{i=1}^{n }R_{i,l, \infty} f^{(n-i)}dx.
\end{eqnarray*} 
The second sum is by hypothesis of induction in $V$
and the third by (\ref{9july2013}).
The first sum is by (\ref{xfn})
$$
-\left (\sum_{i=1}^n  \frac{ (-1)^ia_{i,\infty}}{(l-i+1)(l-i+2)\cdots l}\right )x^{l} f^{(n)}dx\stackrel{\eqref{Indicialb}}{=}
\left (1-\frac{I_\infty(l-n+1)}{ l\cdots (l-n+1)  }\right) x^lf^{(n)}dx.
$$
Therefore, since 
  $l-n+1$ is not an exponent at $\infty$ we get $x^lf^{(n)}dx$ is in $V$.

\end{proof}

\begin{rem}\rm
Since $x^i f^{(n)} dx =0$ in $H^1_\dR(\P ^1-S, L)$  for $i=0,\ldots,n-1,$ we obtain $n$
$\k$-linear relations between the generators $\eqref{15aug2012}$ of   $H^1_\dR(\P ^1-S, L)$.
\end{rem}

\begin{rem}\rm
 If $\infty$ is no singularity then the exponents at $\infty$ are $0,-1,\ldots,-n+1$. 
 Thus the condition that the exponent is positive is compatible with the condition that the exponents
 are $\geq n$ at the finite singularities.
\end{rem}
\begin{rem}\rm
\label{19july2013}
Without the hypothesis on indicial equations of $L$, we have to add the following
elements
\begin{eqnarray*}
\frac{f^{(n)}}{(x-t_j)^l}  dx,&& \mbox{if }  I_{t_j}(l+n-1)=0,\\
x^lf^{(n)}dx, && \mbox{if } I_{\infty}(l-n+1)=0, \;l\geq n
\end{eqnarray*}
to the set (\ref{15aug2012}) in order to get a set of generators.
\end{rem}


\subsection{Explicit set of generators, $\k\neq{\bar \k}$}
\label{3july2019}

In case $\k\not=\bar\k$ and for computational purposes we modify Theorem \ref{11Jul2013}  and reprove  it over $\k$. 
For this we proceed as follows. Let  
$$
\Delta=\prod_{i=1}^r(x-t_i),
$$
where $t_i\in\bar\k$ are the finite singular points of $L=0$ (without repetition).
Since $L$ is defined over $\k$, 
the Galois group of $\bar\k$ over $\k$ acts on $t_j$'s and so $\Delta\in\k[x]$. 
Thus we can write $L$ in the following way
\begin{equation}
\label{silveiramartins2019}
L=\sum_{i=0}^n \Delta^i \tilde p_{n-i}(x) \partial^i_x, \quad \quad  \tilde p_i \in \k[x], \;
\deg \tilde p_i\leq  i (r-1), \quad \tilde p_0=1,
\end{equation}
see for instance \cite[I. Prop. 4.2]{IKSY}.

\begin{theo}
\label{11july2013}
If $L$ 
has no integer exponent $\geq n$ in the Riemann-scheme at a finite point and no positive integer exponent at $\infty$
then
$H^1_\dR(\P ^1-S, L)$ is generated by
\begin{equation}
\label{15aug2012-D}
\frac{x^jf^{(i)}dx}{\Delta} ,\ \ j=0,1,\ldots, r-1, \  i=0,1,2,\ldots,n-1.
\end{equation}
\end{theo}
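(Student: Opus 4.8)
The plan is to deduce the statement from Theorem~\ref{11Jul2013} by faithfully flat base change from $\k$ to $\bar\k$. Let $N_\k$ and $R_\k$ denote the numerator and denominator $\k$-vector spaces in the definition of $H^1_\dR(\P^1-S,L)$, so that $H^1_\dR(\P^1-S,L)=N_\k/R_\k$, and let $N_{\bar\k}$, $R_{\bar\k}$ be the analogous $\bar\k$-vector spaces obtained by allowing $p$ to range over $\bar\k(x)$. Since $\bar\k$ is a free, hence faithfully flat, $\k$-module, tensoring with $\bar\k$ over $\k$ is exact, and therefore $H^1_\dR(\P^1-S,L)\otimes_\k\bar\k=(N_\k\otimes_\k\bar\k)/(R_\k\otimes_\k\bar\k)$.

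Next I would identify these base changes with the corresponding objects over $\bar\k$. Every $p\in\bar\k(x)$ with $\mathrm{pol}(p)\subset S$ can be written $p=g/\Delta^m$ with $g\in\bar\k[x]$ and $m\ge 0$; expanding $g=\sum_j c_j x^j$ with $c_j\in\bar\k$ exhibits $p=\sum_j c_j\,(x^j/\Delta^m)$ as a $\bar\k$-linear combination of functions $x^j/\Delta^m\in\k(x)$ with poles in $S$. Hence each generator $p\,f^{(i)}dx$ of $N_{\bar\k}$ and each generator $d(p\,f^{(i)})=\sum_j c_j\,d\!\bigl((x^j/\Delta^m)\,f^{(i)}\bigr)$ of $R_{\bar\k}$ lies in $N_\k\otimes_\k\bar\k$, resp. $R_\k\otimes_\k\bar\k$, while the reverse inclusions are clear. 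Therefore $N_{\bar\k}=N_\k\otimes_\k\bar\k$, $R_{\bar\k}=R_\k\otimes_\k\bar\k$, and
$$
H^1_\dR(\P^1-S,L)\otimes_\k\bar\k\;=\;N_{\bar\k}/R_{\bar\k},
$$
which is precisely the cohomology group to which Theorem~\ref{11Jul2013} applies over the algebraically closed field $\bar\k$; note that the hypotheses on the Riemann scheme of $L$ are word for word the same in the two theorems.

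Now I would transport the generators. By Theorem~\ref{11Jul2013}, $N_{\bar\k}/R_{\bar\k}$ is generated over $\bar\k$ by the classes of $(x-t_j)^{-1}f^{(i)}dx$, $1\le j\le r$, $0\le i\le n-1$. Put $\Delta_j:=\prod_{k\neq j}(x-t_k)$, so that $(x-t_j)^{-1}=\Delta_j/\Delta$. Evaluating at the points $t_i$ shows that $\Delta_1,\dots,\Delta_r$ are linearly independent over $\bar\k$, hence form a basis of the space of polynomials of degree $\le r-1$; consequently $\mathrm{span}_{\bar\k}\{\Delta_j/\Delta\mid 1\le j\le r\}=\mathrm{span}_{\bar\k}\{x^j/\Delta\mid 0\le j\le r-1\}$. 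It follows that the classes of $x^jf^{(i)}dx/\Delta$ with $0\le j\le r-1$ and $0\le i\le n-1$ also generate $H^1_\dR(\P^1-S,L)\otimes_\k\bar\k$ as a $\bar\k$-vector space.

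Finally I would descend to $\k$. Let $V\subseteq H^1_\dR(\P^1-S,L)$ be the $\k$-vector space spanned by the classes of the $x^jf^{(i)}dx/\Delta$, $0\le j\le r-1$, $0\le i\le n-1$. Applying the exact functor $-\otimes_\k\bar\k$ to the exact sequence $0\to V\to H^1_\dR(\P^1-S,L)\to H^1_\dR(\P^1-S,L)/V\to 0$, the image of $V\otimes_\k\bar\k$ in $H^1_\dR(\P^1-S,L)\otimes_\k\bar\k$ is, by the previous step, all of it; hence $\bigl(H^1_\dR(\P^1-S,L)/V\bigr)\otimes_\k\bar\k=0$, and faithful flatness of $\bar\k$ over $\k$ forces $H^1_\dR(\P^1-S,L)=V$, which is the claim. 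There is no deep obstacle here; the only thing requiring care is the compatibility of $H^1_\dR(\P^1-S,L)$ with the extension $\k\subseteq\bar\k$, handled by the monomial expansion $g=\sum_j c_jx^j$ in the second step. Alternatively one can bypass base change and rerun the induction in the proof of Theorem~\ref{11Jul2013} directly over $\k$: one uses the shape $L=\sum_{i=0}^n\Delta^i p_{n-i}(x)\del_x^i$ to rewrite $\Delta^nf^{(n)}$, together with division by $\Delta$ and a B\'ezout identity $u\Delta+v\Delta'=1$ — available since $\Delta$ is squarefree — to lower the power of $\Delta$ in a denominator; the scalars that must be nonzero again turn out to be the values $I_{t_j}(l+n-1)/\bigl(l\cdots(l+n-1)\bigr)$ coming from the indicial polynomials, but keeping track of the polynomial degrees along the way makes this route noticeably more tedious.
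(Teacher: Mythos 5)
Your argument is correct and is essentially the paper's own (first) proof, just with the details made explicit: base-change to $\bar\k$, apply Theorem~\ref{11Jul2013} there, observe that $\langle (x-t_j)^{-1}\rangle_{\bar\k}=\langle x^k/\Delta\rangle_{\bar\k}$ as spaces of degree-$(r-1)$ numerators over $\Delta$, and descend by faithful flatness. The alternative route you sketch in your last sentence (rerunning the induction over $\k$ using $1=a\Delta+b\Delta'$ and the polynomials $c_l$ built from the indicial data) is precisely the paper's second, algorithmic proof.
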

\begin{proof}
 The $\bar\k$-vector space $H^1_\dR(\P ^1-S, L)\otimes_\k \bar\k$ has a set of generators (\ref{15aug2012}) and for fixed $i\in\N$ we have
$$ 
\langle \frac{f^{(i)}dx }{x-t_j} \mid j=1,\ldots,r \rangle_{\bar\k}= \langle \frac{x^kf^{(i)}dx}{\Delta}\mid k=0,\ldots, r-1 \rangle_{\bar\k},\ \ 
i=0,1,2,\ldots,n-1.
$$
\end{proof}
In order to implement the above proof in a computer, one has
to introduce new variables $t_j$ for each singularity and
so it does not give  an effective 
algorithm which writes an element of $H^1(\P ^1-S,L )$ in terms of the generators (\ref{15aug2012-D}). 
We give a second proof which is algorithmic and does not use $\bar \k$.

By the extended Euclidean algorithm, there are  polynomials $a, b \in \k[x]$ such that
\[ 1 = a \Delta+ b \Delta'.\]
 For 
$t \in \k$ 
we define  
\begin{equation}\label{ct}
c_t:= 1+\sum_{i=1}^n\frac{\tilde p_i b^i}{t(t+1)\cdots (t+i-1)}\in\k[x].
\end{equation}
\begin{lemm}
For a fixed $t\in\k$, we have  $gcd(c_t, \Delta)=1$ if and only if  $t+n-1$ is not  an exponent 
of $L$ at finite singularities $t_j,\ j=1,2,\ldots,r$.
\end{lemm}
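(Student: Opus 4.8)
The plan is to relate the gcd condition on $c_t$ directly to the indicial equation $I_{t_j}$ evaluated at $t+n-1$. First I would observe that $\gcd(c_t,\Delta)=1$ is equivalent to $c_t(t_j)\neq 0$ for every finite singularity $t_j$, since $\Delta=\prod_{i=1}^r(x-t_i)$ has simple roots exactly at these points. So the whole claim reduces to computing $c_t(t_j)$ and showing it equals (up to a nonzero factor) $I_{t_j}(t+n-1)$, which vanishes precisely when $t+n-1$ is an exponent at $t_j$.

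To evaluate $c_t(t_j)$, I would use the relation $1=a\Delta+b\Delta'$. Evaluating at $x=t_j$ gives $1=b(t_j)\Delta'(t_j)$, so $b(t_j)=1/\Delta'(t_j)$, which is exactly the residue-type quantity that converts between the $\Delta$-normalized form $L=\sum_i\Delta^ip_{n-i}\partial_x^i$ and the pole-order-normalized form. Indeed, in the format $L=\partial_x^n+\sum_{i=1}^n\frac{a_{i,t_j}}{(x-t_j)^i}\partial_x^{n-i}+(\text{lower order at }t_j)$ one has, comparing with $\partial_x^n+\sum_{i=1}^n\Delta^ip_{n-i}\partial_x^{n-i}$ and noting $\Delta(x)=(x-t_j)\cdot\frac{\Delta(x)}{x-t_j}$ with $\frac{\Delta}{x-t_j}\big|_{x=t_j}=\Delta'(t_j)$, that $a_{i,t_j}=p_i(t_j)\,\Delta'(t_j)^i=p_i(t_j)\,b(t_j)^{-i}$. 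Substituting $b(t_j)=1/\Delta'(t_j)$ into $c_t$ then yields
\[
c_t(t_j)=1+\sum_{i=1}^n\frac{p_i(t_j)\,b(t_j)^i}{t(t+1)\cdots(t+i-1)}=1+\sum_{i=1}^n\frac{a_{i,t_j}}{t(t+1)\cdots(t+i-1)}.
\]
This is precisely the quantity that appeared in the proof of Theorem \ref{11Jul2013} (with $l$ replaced by $t$), where it was identified via \eqref{Indicial} as $\frac{I_{t_j}(t+n-1)}{t(t+1)\cdots(t+n-1)}$. Since the denominator $t(t+1)\cdots(t+n-1)$ is a nonzero element of $\k$ (here $t\in\k$ and we may assume none of $t,t+1,\ldots,t+n-1$ is zero, or treat that boundary case separately), we conclude $c_t(t_j)=0$ iff $I_{t_j}(t+n-1)=0$, i.e. iff $t+n-1$ is an exponent of $L$ at $t_j$.

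Putting the pieces together: $\gcd(c_t,\Delta)\neq1$ iff $c_t(t_j)=0$ for some $j$ iff $t+n-1$ is an exponent at some $t_j$, which is exactly the contrapositive of the statement. The one routine-but-delicate point I would be careful about is the bookkeeping in the identity $a_{i,t_j}=p_i(t_j)\,b(t_j)^{-i}$: one must check that the "lower order at $t_j$" terms coming from the polynomials $p_{n-i}$ and from the factor $\Delta^i/(x-t_j)^i$ not reducing exactly to a constant do not contribute to the leading coefficient $a_{i,t_j}$, and that the normalization $p_0=1$, $\Delta$ monic is used consistently. That verification is the main obstacle, but it is essentially a local Laurent-expansion computation at $x=t_j$ and parallels what was already done implicitly in deriving \eqref{format}; everything else is formal manipulation with the Euclidean identity.
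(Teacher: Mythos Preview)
Your approach is essentially identical to the paper's: reduce $\gcd(c_t,\Delta)=1$ to $c_t(t_j)\neq 0$ for each root, use $b(t_j)=1/\Delta'(t_j)$ from the B\'ezout identity, and recognize $c_t(t_j)$ as $I_{t_j}(t+n-1)/\bigl(t(t+1)\cdots(t+n-1)\bigr)$. One slip to fix: in your intermediate line you write $a_{i,t_j}=p_i(t_j)\,\Delta'(t_j)^{i}=p_i(t_j)\,b(t_j)^{-i}$, but the correct relation (which you in fact use in the very next display) is $a_{i,t_j}=p_i(t_j)/\Delta'(t_j)^{i}=p_i(t_j)\,b(t_j)^{i}$, coming from $(x-t_j)^i/\Delta^i\to 1/\Delta'(t_j)^i$; also the ``comparing with'' form should be $\partial_x^n+\sum_i \frac{p_i}{\Delta^i}\partial_x^{n-i}$ after dividing $L$ by its leading coefficient $\Delta^n$.
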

\begin{proof}
Let $t_j$ be a root of $\Delta$.
We have
\[ 1=b(t_j)\cdot \Delta'(t_j),\ \ \ \Delta'(t_j)= (\frac{\Delta}{x-t_j})(t_j) \]
and so
$$ 
a_{i,t_j}:=\lim_{x\to t_j} \frac{\tilde p_i(x)(x-t_j)^i}{\Delta^i}=p_i(t_j)  (\frac{1}{\Delta'(t_j)})^i =\tilde p_i(t_j)b(t_j)^i. 
$$
Thus multiplying
$c_t$ by $t(t+1)\cdots (t+n-1)$ and evaluating $x$ at $t_j$ gives the value of the indicial equation $I_{t_j}$ evaluated at $t+n-1$.
\end{proof}

\begin{proof}[Second Proof of Theorem \ref{11july2013}]
All the qualities below are in the cohomology group $H^1_\dR(\P ^1-\Delta, L)$. 
 Obviously it is generated by 
$$
 f^{(i)} x^ldx,  \frac{f^{(i)} x^k}{\Delta^l} dx,\ \ \ l\in \NN,\ \ i=0,1,\ldots, n-1,\ k=0,1,\ldots,r-1.
$$
Note that by division over $\Delta$, it is enough to consider $0\leq k<r$.
Let $V$ be the $\k$-vector space generated by (\ref{15aug2012-D}). 
At first we show that 
$\frac{f^{(j)} x^k }{\Delta^l} dx \in V$. Again for $p \in \k[x]$ we have
$$
0=d(\frac{f^{(j)} p }{\Delta^l})=\frac{(f^{(j)} p)'}{\Delta^l} dx+(-l) \frac{f^{(j)} p \Delta'}{\Delta^{l+1}}dx. 
$$
For $l \in \NN$ we get
\begin{eqnarray}
 \notag \frac{f^{(j)} x^k}{\Delta^{l+1}}dx&=&
\frac{f^{(j)} x^k(a \Delta+b \Delta')}{\Delta^{l+1}}dx\\
 \notag &=&\frac{f^{(j)} x^k a}{\Delta^{l}}dx+ \frac{f^{(j)} x^k b \Delta'}{\Delta^{l+1}}dx\\ 
 \notag &=&\frac{f^{(j)} x^k a}{\Delta^{l}}dx +\frac{1}{l} \frac{(f^{(j)} x^k b)'}{\Delta^{l}}dx  \\\label{reduction}
 &=&\frac{f^{(j)} x^k a}{\Delta^{l}}dx +\frac{1}{l} \frac{f^{(j)} (x^k b)'}{\Delta^{l}}dx+
    \frac{1}{l} \frac{f^{(j+1)} x^k b}{\Delta^{l}}dx.
\end{eqnarray}
Hence if $j<n-1$ we can reduce the pole order.
It remains to show that for $j=n-1$ we can reduce the pole order.
Let $q \in k[x]$. Then
\begin{eqnarray}
\label{qfn} \frac{q f^{(n)}}{\Delta^l} dx &=&  \frac{\Delta^n q f^{(n)}}{\Delta^{l+n}} dx \\
\notag &=& -\sum_{i=1}^n  \frac{ q \tilde p_{i} f^{(n-i)}}{\Delta^{l+i}} dx\\
\notag &\stackrel{\eqref{ct}}{=}&  -q (c_l-1)  \frac{f^{(n)}}{\Delta^l}dx+ \mbox{lower pole orders terms}
\end{eqnarray}
where the last equality follows by  (\ref{ct}) and (\ref{reduction}). 
 Since $gcd(c_l,\Delta)=1$ we have polynomials $A,B\in\k[x]$ such that $A c_l+B\Delta=1$. 
Hence the pole order of
$$\frac{x^kf^{(n)}}{\Delta^l} dx=\frac{(x^k A)c_lf^{(n)}}{\Delta^l} dx+\frac{x^k Bf^{(n)}}{\Delta^{l-1}} dx$$
can be reduced to $l-1$ using  \eqref{qfn} with $q=x^k A$. 
Reducing the pole order of $\Delta$ may yield  also terms
$x^j f^{(i)} dx$.
However by the same arguments as in the proof of Theorem \ref{11Jul2013} we have  $x^j f^{(i)} dx\in V$.
\end{proof}
\begin{rem}\rm
\label{24march2016}
In Theorem \ref{11july2013},  without the hypothesis on indicial equations of $L$, we have to add the following
finite number of elements
\begin{eqnarray*}
\frac{x^k f^{(n)}}{\Delta^l}  dx,&& \mbox{if }  0\leq k < \deg \left(gcd(\Delta,c_l)\right),\\
x^lf^{(n)}dx, && \mbox{if } I_{\infty}(l-n+1)=0, \;l\geq n
\end{eqnarray*}
to the set (\ref{15aug2012}) in order to get a set of generators.
\end{rem}

\subsection{Cohomologies over function fields}
\label{15july2013}
In this section we turn to the main problem posed in the Introduction, that is, how to compute the linear differential equation of
\eqref{16july2013}. Let us assume that $\k=\tilde\k(y)$, where $y$ is a variable and $\tilde\k$ is a subfield of $\C$, and so we have the 
derivation $\partial_y:\k\to\k$. 
Let 
\begin{equation}
\label{kronecker}
dY_i=A_iY_i,\ i=1,2,\ \ A_i\in\Mat_{n_i\times n_i}(\tilde \k(x,y)dx+\tilde \k(x,y)dy )
\end{equation}
be the Gauss-Manin connection of the family $X_i\to \P^1_x\times \P^1_y,\ \ i=1,2$.
We make the Kronecker product of these two systems and  obtain the system 
\begin{equation}
\label{kronecker-1}
dY=M\cdot Y,\ \
\end{equation}
where  $M=A_1\otimes I_{n_2}+I_{n_1}\otimes A_2$ and $I_{n_i}$ is the $n_i\times n_i$ identity matrix. 
A solution of \eqref{kronecker-1}  is given by $Y=Y_1\otimes Y_2$.
If we write 
$M=Adx+Bdy,\ A,B\in\Mat_{n\times n}(\tilde \k(x,y))$ then  the two dimensional  system \eqref{kronecker-1} in $x,y$ variables is 
equivalent to  $\partial_xY=AY,\ \ \partial_y Y=BY$. 
It is integrable, and hence,  $dM=-M\wedge M$ or equivalently $\partial_x B-\partial_y A=BA-AB$.

The first  entry $f$, and in general any $\k(x)$-linear combination of the entries,  of $Y$  satisfies a linear differential equation $L=0$, $L\in \k[x,\partial_x]$, with respect to the variable $x$.  
From the integrability condition we conclude that a solution of $L=0$ depends holomorphically on both $x,y$. 
We need that  $\partial_y$ induces a well-defined map  
\begin{equation}
\label{15/03/2016}
\partial_y: H^1_\dR(\P^1-S, L)\to H^1_\dR(\P^1-S, L).
\end{equation}
Note that if we use the system \eqref{8july} and the definition \eqref{12.09.2019} then \eqref{15/03/2016} is well-defined, however, for linear differential equations with the definition \eqref{12sept2019}, \eqref{15/03/2016} is not necessarily well-defined. In order to get the map \eqref{15/03/2016} we assume that the differential system $\partial_xY=AY$ is irreducible over $\k=\tilde\k(y)$, that is, there is no non-zero  $\partial_x$ invariant proper subspace of the $\k(x)$ vector space generated by the entries of $Y$. This may not be the case in general, for instance when the two systems in \eqref{kronecker} are the same. In this case we have to find the decomposition of \eqref{kronecker-1} into irreducible components. This irreduciblity condition is satisfied in may examples in which one of the systems in \eqref{kronecker}, say $i=1$, is trivial, that is $n_1=1$ and $A_1=0$, and so $M=A_2$. Therefore, we have to assume that \eqref{kronecker} for $i=2$ is irreducible. Our main examples in \S\ref{examples} are of this form.      
We conclude that the 
$\k(x)$ vector space generated by the entries of $Y$  is the same as the 
$\k(y)$ vector space generated by $f, \partial_xf, \ \partial_x^2f,\cdots$.
This implies that if we set 
$X:= \begin{bmatrix}
f, \partial_xf,\cdots, \partial_x^{n-1}f
\end{bmatrix}^\tr$ and 
write
$$
X=CY,\ \ C\in\Mat_{n\times n}(\tilde \k(x,y))
$$
then $C$ is invertible. The matrix $C$ can be computed in the following way. We have $\partial_x^mY=A_mY$ with 
\begin{equation}
\label{03aug2019-1}
A_{m+1}=\partial_x A_m+A_m\cdot A,\ \ A_1:=A
\end{equation}
and the $i$-th row of $C$ is the first row of $A_i$.  
It follows that
\begin{equation}
\label{03aug2019-2}
\partial_yX=D\cdot X,\ \ D:=\partial_y C\cdot C^{-1}+C\cdot B\cdot C^{-1}. 
\end{equation}
Let 
$$
\omega=\left[\frac{f}{\Delta}, \frac{\partial_x f}{\Delta},\cdots, \frac{\partial_x^{n-1}f}{\Delta},\cdots 
\frac{x^j f}{\Delta}, \frac{x^j \partial_x f}{\Delta},\cdots, \frac{x^j\partial_x^{n-1}f}{\Delta},\cdots
\right ]^\tr
$$ 
be the $nr\times 1$ matrix containing the elements  (\ref{15aug2012-D}).
We write \eqref{15/03/2016} in $\omega$:
\begin{equation}
\label{2ag2019}
\partial_y\omega=E\cdot \omega.
\end{equation}
The matrix $E$ can be computed in the following way. We have
\begin{equation}
\label{paissandu186}
\partial_y\left(  \frac{x^j \partial_x^i f}{\Delta}\right)=
\frac{x^j \partial_y \partial_x^i f}{\Delta}- \frac{x^j\partial_y\Delta\cdot  \partial_x^i f}{\Delta^2}
\end{equation}
The first term can be written in the basis $\omega$ using \eqref{03aug2019-2}. For the scond term we have to use
pole order reduction as in the proof of Theorem \ref{11july2013}.
 The differential system
\begin{equation}
\label{finalsystem}
\del_y W=E\cdot W
\end{equation}
is satisfied by $W=\int \omega$, where the integration takes place over 
a fixed closed path in the $x$-domain such that the entries of $\omega$ are one valued.
Let $\sol_i,\ i=1,2$  be as in the Introduction. By convention $\sol_i$ is the first entry of $Y_i$, and hence, $\sol_1\sol_2$ is the first entry of $Y$ is \eqref{kronecker-1}. In order to compute the Picard-Fuchs equation of  the integral \eqref{16july2013}, we have to write $f=\sol_1\sol_2$ as a $\tilde\k(y)$-linear combination of the entries of $\omega$. Then we compute the Picard-Fuchs equation of the same linear combination of the entries of $W$ using the system \eqref{finalsystem}.  

\begin{rem}\rm
The output linear differential equation $L$ of our algorithm is not necessarily the differential equation of minimal order annihilating $W$. The fact that (\ref{15aug2012-D}) might not form a basis of the cohomology $H^1_\dR(\P^1-S, L)$ might result in this kind of phenomena. As far as the authors are aware,  this defect is also present in almost all algorithms in the literature  for computing Picard-Fuchs equations. One has to use other algorithms in order to decompose $L$ into irreducible factors and check the minimality.  	
\end{rem}	

The above process generalizes the classical convolution. In  a geometric context this is as follows.
Let  $X_i:=\tilde X_i\times \P^1_y\to \P^1_y,\ i=1,2$ be projections on the second coordinate, $X_1\to \P^1_x$ be a morphism which does not depend on the second coordinate
and $X_2\to \P^1_x$ be of the form $y- M$, where $M: X_2\to \P^1_x$ does not depend on the  second coordinate
 and by abuse of notation we have used $y$ as the projection map on the second coordinate.
 We can see easily that $\sol_1(x,y)=\sol_1(x)$ and $\sol_2(x,y)=\sol_2(y-x)$.
The integral (\ref{16july2013}) in this case is the classical  convolution.
The details of the classical convolution will be explained  in the next section.

\subsection{The algorithm}

%
%

\begin{algorithm}[H]
\label{ConvAlg}
\KwData{ Two matrices $A_1$ and $A_2$  with entries in $\tilde \k(x,y)dx+\tilde \k(x,y)dy$ representing the systems \eqref{kronecker}.
}
\KwResult{The matrix $[q_{0},q_{1},\cdots,q_{n}]$ with entries in $\tilde \k[y]$  representing the Picard-Fuchs equation
\eqref{31july2019-2}. }
	\Begin{

	Compute the Kronecker product $M:=Adx+Bdy$ of $A_1$ and $A_2$ as in \eqref{kronecker-1}\;
		Compute the linear differential equation $L$, represented by $[p_0,p_1,\ldots,p_m]$ of  the first entry of $f$ of $Y$ in $\partial_xY=AY$. 
		This can be done for instance by 
		{\tt sysdif} from {\tt foliation.lib}, see \cite{ho13} \;
		Compute $A_m$'s through the recursion \eqref{03aug2019-1} and then $C$ whose $i$-th row 
		is the first row of  $A_i$ \;
		Compute $D:=\partial_y C\cdot C^{-1}+C\cdot B\cdot C^{-1}$ in \eqref{03aug2019-2}\; 
		Apply the algorithm in Theorem \ref{11july2013} and write 
		$\frac{x^j\partial_y\Delta\cdot  \partial_x^i f}{\Delta^2}$ (and $f$ itself) in terms of the generators  $\omega$.  This involves computing $\tilde p_i$'s in \eqref{silveiramartins2019}. In the case of $f$ let us denotes the coefficients by $Q$  \;
		Compute $E$ in \eqref{2ag2019} using \eqref{paissandu186}. \;
		Compute the Picard-Fuchs equation $L$, represented by the matrix $[q_{0},q_{1},\cdots,q_{n}]$,  of the linear combination of the  entries of the system \eqref{finalsystem} with coefficients coming from $Q$\; 
		\Return $L$\;
	}
\caption{Computation of the convolution of two PF equations.}
\end{algorithm}

\section{The classical convolution}
In this section we remind the classical 
convolution of two solutions of Fuchsian linear differential 
equations and argue that
the material in \S\ref{section1} is a generalization of this concept. 

\subsection{Okubo system}
\label{okubosection}
 A linear differential system of the format
\begin{equation}
\label{okubos}
(xI_{n}-T)Y'=A Y,\; 
 \end{equation}
$$
T=\diag(t_1I_{n_1},\ldots,t_rI_{n_r}),\; \sum n_i=n,\;
 \; T, A\in \Mat_{n\times n}(\C)
$$
 is called an Okubo system (in normal form) and it is a useful format for doing computations, such as convolution.
 For a Fuchsian system
 \[ D_{a}:  Y'= \sum_{i=1}^r \frac{a_i}{x-t_i}Y,\; a_i \in \Mat_{n\times n}(\C),\ \ a=(a_1,a_2,\cdots,a_r)
 \]
 we introduce the following special Okubo system:
\begin{equation}
\label{18july13}
D_{c_\mu(a)} : (xI_{nr}-T)X'= c_\mu(a) X,\;
 c_\mu(a):=\left(\begin{array}{ccc}
                        a_1 & \dots & a_r \\
                         \vdots & \vdots & \vdots \\
                         a_1 & \dots & a_r 
                 \end{array} 
\right)+\mu I_{nr},\; \mu \in \CC. 
\end{equation}
where  $n_i=n,\;i=1,\ldots,r$. Via the following procedure 
we see that any Fuchsian system
is a factor system of an Okubo system.
 Let $f(x)$ be a solution of $D_a.$ Then
\begin{eqnarray}\label{tildef}
\tilde{f}(x) &:=& \left(\begin{array}{c}f(x)(x-t_1)^{-1}\\
\vdots\\
f(x)(x-t_r)^{-1}\end{array}\right)
\end{eqnarray}
satisfies the  Okubo system $D_{c_{-1}}(a)$. 
If $\k$ is not not algebraically closed we work with the following equivalent Okubo system defined over $\k$.
Let $L$ be  a Fuchsian system of dimension $n$
 \begin{eqnarray*} L:&& Y' =\sum_{i=1}^{r} \frac{x^{i-1}}{\Delta} \tilde{a}_i Y \end{eqnarray*}       
 with $\tilde{a}_i \in \Mat_{n\times n}(\k)$  and
 $\Delta=\sum_{i=0}^{r} b_{r-i} x^i\in \k[x], b_0=1$.
 If $f$ is a solution of $L$ then
 \[ (f/\Delta,xf/\Delta,\ldots,x^{r-1}f/\Delta)^{\tr}\]
 satisfies the Okubo system
 \[ (xI_{rn}-\tilde{T})Y'=\tilde{A} Y,\]
 where
 \begin{eqnarray*} \tilde{T}=\left( \begin{array}{ccccccc}
                        0 & I_n & 0  \cdots& 0\\
                            0 & 0 &\ddots &\vdots  \\
                          0 & \cdots & 0 &I_n \\
                            -b_r I_n&\cdots &-b_2 I_n &-b_1 I_n
                     \end{array} \right),&&                     
  \tilde{A}=   \left( \begin{array}{ccccccc}
                       0 & \cdots & 0\\
                       \vdots & \cdots& \vdots \\
                         0 & \cdots & 0\\
                         \tilde{a}_1& \cdots &\tilde{a}_{r}
                      \end{array}\right)-I_{rn}.
 \end{eqnarray*}

\subsection{Cohomology of Okubo system}
 For the matrix $A$ in (\ref{okubos}) we consider its submatrices $A=[A_{ij}]$ according to the
 partition $n=n_1+n_2+\cdots+n_r$ and in particular its   $n_i\times n_i$ 
 submatrices $A_{ii}$ lying in the diagonal of $A$.  

 \begin{theo}
\label{theo-okubo}
Let $L$ be  the Okubo system   (\ref{okubos}) with solution $f$.
If
\begin{equation}
 \label{10Aug2012}
\det(A+mI_{n\times n})\not =0, \ \det(A_{ii}-mI_{n_i\times n_i})\not =0,\ \ \forall m\in\N
\end{equation}
then
$H^1_\dR(\P ^1-S, L)$ is generated by
\begin{equation}
\label{10aug2012}
(x-t_i)^{-1}f_jdx,\ \ i=1,2,\ldots,r, \ \ j=1,2,\ldots,n
\end{equation}
and so it is of dimension at most $n\cdot r$. 
\end{theo}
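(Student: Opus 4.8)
The plan is to imitate the proof of Theorem~\ref{11Jul2013}, using the block structure of the Okubo system in place of the companion structure and replacing the scalar values of the indicial polynomials by the matrices appearing in~\eqref{10Aug2012}. Write $F=[f_1,\dots,f_n]^\tr$ for the column solution, partition it as $F=(F_1,\dots,F_r)^\tr$ according to the blocks $t_iI_{n_i}$ of $T$, and invert $xI_n-T$ in~\eqref{okubos} to put the system in the Fuchsian form $F'=\sum_{i=1}^r (x-t_i)^{-1}A_iF$ with $A_i:=P_iA$, where $P_i$ is the projector onto the $i$-th block. Then $A_i$ has only its $i$-th block-row nonzero, its $(i,i)$-block is $A_{ii}$, and $\sum_iA_i=A$ is, up to sign, the residue of the connection at $\infty$; in particular the $i$-th block of $F'$ equals $(x-t_i)^{-1}\sum_m A_{im}F_m$. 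By definition $H^1_\dR(\P^1-S,L)$ is generated by the forms $x^lf_j\,dx$ and $(x-t_i)^{-l}f_j\,dx$ ($l\in\NN$), so it suffices to show each of these lies in the $\k$-span $V$ of the forms~\eqref{10aug2012}.

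First I would carry out the pole reduction. Fixing a finite singularity $t_i$, I would prove by induction on $l\ge1$ that $(x-t_i)^{-l}F_k\,dx\in V$ for every block index $k$; the case $l=1$ is tautological. For the inductive step one differentiates $(x-t_i)^{-l}F_k$, which vanishes in $H^1_\dR$, and substitutes $F_k'=(x-t_k)^{-1}\sum_m A_{km}F_m$. If $k\neq i$, partial fractions for $(x-t_i)^{-l}(x-t_k)^{-1}$ rewrite $(x-t_i)^{-l-1}F_k\,dx$ as a $\k$-combination of forms $(x-t_i)^{-s}F_m\,dx$ with $s\le l$ (in $V$ by induction) and of the generators $(x-t_k)^{-1}F_m\,dx$, hence it lies in $V$. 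If $k=i$, the same computation produces the relation
\[ (lI_{n_i}-A_{ii})\,(x-t_i)^{-l-1}F_i\,dx=\sum_{k\neq i}A_{ik}\,(x-t_i)^{-l-1}F_k\,dx, \]
whose right-hand side already lies in $V$ by the case just treated; so the invertibility of $lI_{n_i}-A_{ii}$ for $l\ge 1$ — which is the second condition in~\eqref{10Aug2012} — gives $(x-t_i)^{-l-1}F_i\,dx\in V$.

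Next I would treat the polynomial forms $x^lF_j\,dx$ by induction on $l\ge0$. Differentiating $x^lF_j$ (zero in $H^1_\dR$) and substituting the $i$-th block of $F'$, the pole term that appears is a $(x-t_j)^{-1}$-multiple of the $F_m$ and hence lies in $V$ by the previous step; modulo $V$ and modulo forms $x^sF_m\,dx$ with $s<l$ this leaves the relation $x^{l-1}(lI_n+A)F\,dx\equiv0$. The invertibility of $lI_n+A$ for $l\ge1$ — the first condition in~\eqref{10Aug2012} — then yields $x^{l-1}F\,dx\in V$, and induction on the degree finishes the argument. Altogether $H^1_\dR(\P^1-S,L)$ is generated by the $nr$ forms~\eqref{10aug2012}, so $\dim H^1_\dR(\P^1-S,L)\le nr$.

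The step I expect to need the most care is the pole reduction: at a fixed pole order one must dispose of the off-diagonal blocks $k\neq i$ before the diagonal block $k=i$, so that the matrix $lI_{n_i}-A_{ii}$ is inverted against a form already reduced into $V$; this is the exact analogue of the moment in the proof of Theorem~\ref{11Jul2013} where one isolates $(x-t_j)^{-l}f^{(n)}\,dx$ and uses the nonvanishing of $I_{t_j}(l+n-1)$. One should also record that the two reductions do not feed back into each other — pole reduction creates no polynomial terms and polynomial reduction raises no pole order above $1$ — so they can legitimately be performed in sequence.
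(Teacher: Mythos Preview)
Your proof is correct and follows essentially the same approach as the paper: both use integration by parts (the relation $d((x-t_i)^{m+1}f_j)=0$ in cohomology) together with the Okubo identity $f_j'=(x-t_{k(j)})^{-1}\sum_l a_{jl}f_l$, invoking the invertibility of $A_{ii}-lI_{n_i}$ for the diagonal block in the pole reduction and of $A+lI_n$ for the polynomial reduction. The paper compresses both cases into the single identity $(x-t_i)^m f_j\,dx=-(m+1)^{-1}(x-t_i)^{m+1}(x-t_{k(j)})^{-1}\sum_k a_{jk}f_k\,dx$, whereas you treat them separately; one small slip in your polynomial step is that ``modulo forms $x^sF_m\,dx$ with $s<l$'' should read $s\le l-2$, since otherwise the term $x^{l-1}F\,dx$ you are trying to control is already killed.
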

\begin{proof}
 In $H^1_\dR(\P ^1-S, L)$ and for $m\not=-1$ we have
\begin{eqnarray*}
(x-t_i)^mf_jdx &=& -(m+1)^{-1}(x-t_i)^{m+1}f_j'dx\\ 
 &=& -(m+1)^{-1}(x-t_i)^{m+1}(x-t_j)^{-1}\sum_{k=1}^n a_{jk}f_kdx,
 \end{eqnarray*}
 where $A=(a_{jk})$.
If $m$ is negative and $t_i\neq t_j$ then we have reduced the pole order. 
If $m$ is negative and $t_i=t_j$, in order to reduce the pole order 
 we need $A_{ii}+(m+1)I_{n_i\times n_i}$ to be  invertible and
 if $m$ is positive or zero we need that $A+(m+1)I_{n\times n}$ to be invertible.
\end{proof}

\begin{rem}\rm
For a linear differential equation of an entry of the Okubo system,  the conditions (\ref{10Aug2012})
imply the conditions in Theorem \ref{11july2013}.  Without these conditions
a similar observation as in Remark \ref{19july2013} is valid.
 \end{rem}

\subsection{Convolution of Okubo systems}

Given two solutions of two Okubo systems we can easily determine the Okubo system that is satisfied by their convolution.  
\begin{theo}\label{Poch} 
Let  $f_i(x)$ be a solution   of the Okubo system $(xI_{n_i}-T_i) Y_i'= A_iY_i,\ i=1,2,\  A_i\in\Mat_{n_i\times n_i}(\C)$.
 Then 
 $$
 \int_{} f_1(x) \ten f_2(y-x)dx,
 $$
 where 
 the integration is over a path in the $x\in\C$ plane  such that the integrand  is one valued, 
is a solution matrix for the Okubo system
\begin{equation}
\label{19july13}
(yI_{n_1n_2}-T_1 \ten I_{n_2}-I_{n_1}\ten T_2)Y'= (A_1 \ten I_{n_2} + I_{n_1} \ten A_2 +I_{n_1n_2}) Y. 
\end{equation}
 \end{theo}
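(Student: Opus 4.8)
The plan is to verify directly that $F(y):=\int_\gamma f_1(x)\otimes f_2(y-x)\,dx$ satisfies the Okubo system \eqref{19july13}, using only the defining relations of the two given Okubo systems together with integration by parts. First I would write the two hypotheses as $(xI_{n_1}-T_1)f_1'(x)=A_1 f_1(x)$ and, after the substitution $u=y-x$, $(uI_{n_2}-T_2)f_2'(u)=A_2 f_2(u)$, i.e. $((y-x)I_{n_2}-T_2)\,\partial_x\bigl(f_2(y-x)\bigr)=-A_2 f_2(y-x)$ since $\partial_x f_2(y-x)=-f_2'(y-x)$. Tensoring the first with $f_2(y-x)$ and the second with $f_1(x)$ and adding, one gets an identity of the schematic form
\begin{equation*}
(xI-T_1\otimes I)\,\partial_x\bigl(f_1\otimes f_2\bigr)+\bigl((y-x)I-I\otimes T_2\bigr)\partial_x(f_1\otimes f_2)=\bigl(A_1\otimes I - I\otimes A_2\bigr)(f_1\otimes f_2)+(\text{cross terms}),
\end{equation*}
where the "cross terms" are $(xI-T_1\otimes I)(f_1\otimes \partial_x f_2)$ and its companion, which must be bookkept carefully; the point is that the $x$-dependent parts combine into $y\,I_{n_1n_2}-T_1\otimes I_{n_2}-I_{n_1}\otimes T_2$ acting on $\partial_x(f_1\otimes f_2)$.

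Next I would integrate this identity over the path $\gamma$. The left-hand side, after pulling $y\,I-T_1\otimes I-I\otimes T_2$ (which is constant in $x$) out of the integral, becomes $(yI-T_1\otimes I-I\otimes T_2)\,\partial_y F(y)$ — here one uses that $\partial_y F(y)=\int_\gamma \partial_x(f_1(x)\otimes f_2(y-x))\,dx$, which holds because $\partial_y f_2(y-x)=f_2'(y-x)=-\partial_x f_2(y-x)$, so differentiating under the integral sign in $y$ is the same as integrating $\partial_x$ of the integrand. On the right-hand side the term $(A_1\otimes I_{n_2}+I_{n_1}\otimes A_2)F(y)$ appears immediately (after fixing signs by the same substitution trick), and the remaining job is to show that the leftover $x$-derivative contributions integrate to exactly the $+1$, i.e. to $I_{n_1n_2}F(y)$. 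This is where integration by parts enters: a boundary term $\bigl[(\cdots)(f_1\otimes f_2)\bigr]_\gamma$ vanishes because $\gamma$ is a closed path along which the integrand is single-valued, and what remains telescopes to $\int_\gamma f_1(x)\otimes f_2(y-x)\,dx=F(y)$, producing the extra $+1$.

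The main obstacle I anticipate is the precise sign and index bookkeeping in the tensor identity, especially handling the terms where $\partial_x$ hits $f_2(y-x)$ versus $f_1(x)$: these are what force the shift from $A_1\otimes I+I\otimes A_2$ to $A_1\otimes I+I\otimes A_2+1$, and getting the coefficient of the identity to come out as exactly $+1$ (rather than, say, $+2$ or $0$) requires that the integration-by-parts boundary term and the telescoping cancellation be matched correctly. A clean way to organize this is to first do the scalar case $n_1=n_2=1$ (where $f_1,f_2$ are scalar functions and the Okubo systems read $(x-t)f'=af$), check that $\int_\gamma f_1(x)f_2(y-x)\,dx$ satisfies $(y-t_1-t_2)F'=(a_1+a_2+1)F$ by a one-line integration by parts, and then observe that the matrix computation is formally identical once one keeps $A_i$ on the correct tensor factor and never commutes things that do not commute. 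I would also remark, as the paper's surrounding discussion suggests, that this is consistent with \S\ref{15july2013}: $F$ is (up to the chosen period) the $\partial_y$-image computation for the Kronecker product system \eqref{kronecker-1} specialized to the convolution geometry, so Theorem \ref{theo-okubo} guarantees the relevant cohomology is finite-dimensional and the resulting equation is indeed Okubo of the stated size.
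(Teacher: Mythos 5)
Your strategy is correct and, in fact, supplies the computation that the paper itself omits: the paper's entire ``proof'' of Theorem \ref{Poch} is the sentence that it is similar to \cite[Lemma~4.2]{detrei}, and the direct verification you outline is precisely the argument that reference contains. The decomposition
$$
yI_{n_1n_2}-T_1\otimes I_{n_2}-I_{n_1}\otimes T_2=\bigl((xI_{n_1}-T_1)\otimes I_{n_2}\bigr)+\bigl(I_{n_1}\otimes((y-x)I_{n_2}-T_2)\bigr),
$$
applied to $\partial_y F=\int_\gamma f_1(x)\otimes f_2'(y-x)\,dx$, makes the second summand contribute $(I_{n_1}\otimes A_2)F$ directly from the hypothesis on $f_2$, while the first summand, after writing $f_2'(y-x)=-\partial_x f_2(y-x)$ and integrating by parts (the boundary term vanishing on the closed single-valued path), contributes $\int_\gamma \partial_x\bigl[(xI_{n_1}-T_1)f_1(x)\bigr]\otimes f_2(y-x)\,dx=(A_1\otimes I_{n_2}+I_{n_1n_2})F$; the $+1$ is exactly the product-rule term $\partial_x(xI_{n_1}-T_1)=I_{n_1}$, as your scalar sanity check confirms. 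One intermediate identity in your write-up is wrong as literally stated: $\partial_y F(y)$ is \emph{not} $\int_\gamma\partial_x\bigl(f_1(x)\otimes f_2(y-x)\bigr)dx$ --- the latter is the integral of an exact form over a closed path and hence zero, whereas the correct statement is $\partial_y F=\int_\gamma f_1(x)\otimes f_2'(y-x)\,dx=-\int_\gamma f_1(x)\otimes\partial_x\bigl[f_2(y-x)\bigr]dx$ (the term $f_1'(x)\otimes f_2(y-x)$ must not be included). Since the rest of your argument uses the correct version, this is a local misstatement rather than a gap, but it should be fixed; with that correction the bookkeeping of the ``cross terms'' becomes unnecessary if you organize the computation around $\partial_y F$ from the start rather than around $\partial_x(f_1\otimes f_2)$.
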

 The proof of the above theorem is similar to \cite[Lemma~4.2]{2007-DettweilerReiter}.
Note that  the system (\ref{19july13})
 has singularities at $t^1_{i}+t^2_{j},\; i=1,\ldots,r_1,\; j=1,\ldots,r_2,$ and possibly at infinity. 
\subsection{Convolution of Fuchsian systems}
In the case of Fuchsian systems we proceed as follows.
 Let 
 \[ 
 D_{a^i}:  Y'= \sum_{j=1}^{r_i} \frac{a_{j}^i}{x-t_{j}^i}Y, \ \ \ a_{j}^i \in 
 \Mat_{n_i\times n_i}(\C),\ i=1,2
 \]
  be two Fuchsian systems with solutions $f_1, f_2$ resp..
 Then the Okubo system
 \[   
  (yI_{n_1r_1n_2r_2}-(T_1\ten I_{n_2r_2}+I_{n_1r_1}\ten T_2))Y'= (c_0(a^1)\ten I_{n_2r_2} + I_{n_1r_1} \ten c_0(a^2)-I_{n_1r_1n_2r_2})Y 
  \]
  has 
  $ \int \tilde{f}_1(x)\otimes \tilde{f}_2(y-x) dx
  $
  as solution with $\tilde{f}_1, \tilde{f}_2$ as in \eqref{tildef}.

\section{Examples}
\label{examples}
In this section we discuss some examples of families of algebraic varieties 
whose Picard-Fuchs equation can be computed through  the methods introduced
in this article.  
We consider the case in which we have only the family  $X_1$
(take $X_2$ the product of some variety with $\P^1_x\times\P^1_t$).
In this case we want to use the Gauss-Manin connection of the two parameter family $X_1\to 
\P^1_x\times\P^1_t$ and integrate it over the variable $x$ and obtain the Picard-Fuchs equation of the one parameter family $X_1\to \P^1_t$ obtained by the composition 
$X_1\to 
\P^1_x\times\P^1_t\to \P^1_t $, where the second map is the projection. A well-known example for this situation is the 
Legendre family of elliptic curve that is given by
$$
y^2=x(x-1)(x-t)
$$
where $t$ is a parameter. We can compute compute the Picard-Fuchs equation of $\int \frac{dx}{y}$ either by direct methods or by the methods introduced
in this article:
$$
I+(8t-4)I'+(4t^2-4t)I''=0.
$$
In the second case we consider $x$ and $t$ as a parameter and so we get a two parameter family of zero dimensional varieties with two points. 
For what follows, the polynomial $\Delta$ need not to be monic in the $x$ variable. 
Let us consider  the rank 19 family of K3 surfaces given in the affine coordinates $(x,y,w)$ by 
the equation $P=0$, where
$$
P:=y^2w-4x^3+3axw^2+bw^3+cxw-(1/2)(dw^2+w^4)=0
$$
$$a=(16 + t)(256 + t),\ \ b= (-512 + t)(-8 + t)(64 + t), \  c=0, \  d=2985984t^3
$$
and $t$ is a parameter,  see \cite[Section 6.7]{GMCD-K3}. 
Here, we would like to compute the Picard-Fuchs equation of the holomorphic $2$-form given by $\omega=\frac{dx\wedge dy\wedge dw}{dP}$. 
The generic member of the family has two isolated singularities and so one cannot apply the Griffiths-Dwork method or its modification using Brieskorn modules. 
In order to apply the methods introduced in this article, 
we look $P=0$ as a two parameter family  of elliptic curves depending on $(t,w)$. 
In this case we know the explicit expression of Gauss-Manin connection, see for instance \cite[Section 6]{GMCD-K3}. 
Using this we can compute the following differential 
equations for the elliptic integral $f(t,w):=\int \frac{dx\wedge dy}{dP}$
\begin{eqnarray*}
L &:=& A_1f+A_2\partial_w f+A_3 \partial_w^2f =0  \\
 & & B_1f+B_2\partial_t f+B_3 \partial_t^2 f =0 
\end{eqnarray*}
where $A_i,B_i$'s are explicit polynomials in $w,t$ with rational coefficients:
{\tiny
\begin{eqnarray*}
 A_1 &=&  (1283918464548864t^{9}w-133116666404426219520t^{9}+1486016741376t^{8}w^{2}-585466819834281984t^{8}w+
 \\ & & 72814820327424t^{7}w^{2}-
37469876469394046976t^{7}w+ 1719926784t^{6}w^{3}-2077451404443648t^{6}w^{2} \\ & & 
+336571521970697404416t^{6}w-784286613504t^{5}w^{3}
+298249504061128704t^{5}w^{2}-50194343264256t^{4}w^{3}+ \\ & &24931223849681289216t^{4}w^{2}-144t^{3}w^{5}-474771456t^{3}w^{4}+ 
450868486864896t^{3}w^{3}+65664t^{2}w^{5}+\\ & &
4202496tw^{5}+77w^{6}-37748736w^{5})\\
 A_2 &=&  (3851755393646592t^{9}w^{2}-1916879996223737561088t^{9}w+2972033482752t^{8}w^{3}-1756400459502845952t^{8}w^{2}+
 \\ & & 
 145629640654848t^{7}w^{3}-
 112409629408182140928t^{7}w^{2}+1719926784t^{6}w^{4}-3138467357786112t^{6}w^{3}+\\ & &
 1009714565912092213248t^{6}w^{2}-497664t^{5}w^{5}- 784286613504t^{5}w^{4}+596499008122257408t^{5}w^{3}
 -24385536t^{4}w^{5}- \\ & & 50194343264256t^{4}w^{4}+49862447699362578432t^{4}w^{3}-432t^{3}w^{6}-  119439360t^{3}w^{5}+450868486864896t^{3}w^{4}+
 \\ & &
 196992t^{2}w^{6}- 
 99883155456t^{2}w^{5}+12607488tw^{6}-8349416423424tw^{5}+
144w^{7}-113246208w^{6})\\
 A_3 &=&  36 w^2( -w^{2}+2985984t^{3}) 
\left( -w^{4}+(4t^{3}-1824t^{2}-116736t+1048576)\cdot w^{3}+(6912t^{5}+338688t^{4}-7299072t^{3}+ \right. \\ & &
 1387266048t^{2}+115964116992t)\cdot w^{2}+ 
(11943936t^{6}-5446434816t^{5}-348571828224t^{4}+3131031158784t^{3})\cdot w \\ & &
\left. 
-8916100448256t^{6}\right )\\
 B_1 &=& -\frac{1}{36}A_3\cdot ( -w^{2}+2985984t^{3})^{-1}\cdot  \\ & &  
\left( 2985984t^{4}w^{2}-3456t^{3}w^{3}+2842656768t^{3}w^{2}-34560t^{2}w^{3}+73383542784t^{2}w^{2}-7tw^{4}+2211840tw^{3}-952w^{4}+\right.\\ & & 
\left.  905969664w^{3}\right)   \\
 B_2 &=& -13824t^{3}w^{2}+9746251776t^{3}w-138240t^{2}w^{2}+293534171136t^{2}w-24tw^{3}+8847360tw^{2}-3264w^{3}+3623878656w^{2} \\ 
B_3 &=&  
8(t+256)(t+16)(2985984t^3-w^2).
\\
\end{eqnarray*}
} 
We use the
second equality in order to compute the action of $\partial_t$ on the cohomology group constructed from $L$. 
This data is enough to compute the Picard-Fuchs equation of the integral $g(t)=\int f(t,w)dw$ using the techniques introduced in this article. 
Note that we have to use Theorem \ref{11july2013} together with Remark \ref{24march2016} because the differential equation $L$ has the apparent
singularities $-w^{2}+2985984t^{3}$. The end result has a factor
\begin{equation}
\label{5aug2013}
\tilde L:= 1+ (26t+512) \partial_t+ (36t^2+1536t)\partial_t^2+(8t^3+512t^2)\partial_t^3  
\end{equation}
where $\tilde Lg=0$. This differential operator is obtained by direct computations as in \cite[Chapters 10,12]{ho13}. 
The function $g$ can be written as  the period of $\frac{dx\wedge dy\wedge dw}{dP}$ over two dimensional cycles living in 
the $K3$-surface, for further details see \cite{GMCD-K3}. Note that the generic fiber of $P=0$ is singular and in order to apply the algorithms in \cite{ho13}, in \cite{GMCD-K3} we have used a new parameter $s$ and computed the Gauss-Manin connection of the five parameter family of K3 surfaces $P-s=0$. For this we had to run our computer for a few hours and  the outcome data of the Gauss-Manin connection is more than $4$ mega bytes. Despite the fact we have not computed \eqref{5aug2013} by methods introduced in this paper, we believe that it is faster as it computes Picard-Fuchs equation by increasing the dimension one by one, and the available algorithms for higher dimensional families, despite being correct, do not work in practice.

\def\cprime{$'$} \def\cprime{$'$} \def\cprime{$'$} \def\cprime{$'$}


\end{document}